\newtheorem{Theorem}{Theorem}[section]
 \newtheorem{Corollary}[Theorem]{Corollary}
\newtheorem{Lemma}[Theorem]{Lemma}
\newtheorem{Example}[Theorem]{Example}
\newtheorem{Remark}[Theorem]{Remark}
\long\def\delete#1{}
\newcommand{\be}{\begin{equation}}
\newcommand{\ee}{\end{equation}}
\newcommand{\bea}{\begin{eqnarray}}
\newcommand{\eea}{\end{eqnarray}}
\newcommand{\bean}{\begin{eqnarray*}}
\newcommand{\eean}{\end{eqnarray*}}
\def\diam{{\rm diam}}
\def\span{{\rm span}}
\def\hc{{\rm hc}}
\def\ve{\varepsilon}
\def\L{\mathcal{L}}
\def\({\left(}
\def\){\right)}
\def\[{\left[}
\def\]{\right]}
\begin{document}

\title{Hamiltonian chromatic number of trees}
\author{\textbf{Devsi Bantva} \\ Department of Mathematics \\ Lukhdhirji Engineering College, Morvi - 363 642, Gujarat (INDIA) \\ E-mail : \textit{devsi.bantva@gmail.com} \\ \\ \textbf{Samir Vaidya} \\ Department of Mathematics \\ Saurashtra University, Rajkot - 360 005, Gujarat (INDIA) \\ E-mail : \textit{samirkvaidya@yahoo.co.in}}

\pagestyle{myheadings}
\markboth{\centerline{Devsi Bantva and Samir Vaidya}}{\centerline{Hamiltonian chromatic number of trees}}
\date{}
\openup 0.8\jot
\maketitle

\begin{abstract}
Let $G$ be a simple finite connected graph of order $n$. The detour distance between two distinct vertices $u$ and $v$ denoted by $D(u,v)$ is the length of a longest $uv$-path in $G$. A hamiltonian coloring $h$ of a graph $G$ of order $n$ is a mapping $h : V(G) \rightarrow \{0,1,2,...\}$ such that $D(u,v) + |h(u)-h(v)| \geq n-1$, for every two distinct vertices $u$ and $v$ of $G$. The span of $h$, denoted by $\span(h)$, is $\max\{|h(u)-h(v)| : u, v \in V(G)\}$. The \emph{hamiltonian chromatic number} of $G$ is defined as $\hc(G) := \mbox{min}\{\span(h)\}$ with minimum taken  over all hamiltonian coloring $h$ of $G$. In this paper, we give an improved lower bound for the hamiltonian chromatic number of trees and give a necessary and sufficient condition to achieve the improved lower bound. Using this result, we determine the hamiltonian chromatic number of two families of trees.
\end{abstract}

\section{Introduction}\label{sec:intro}

Let $G$ be a simple finite connected graph with vertex set $V(G)$ and edge set $E(G)$. The \emph{order} of a graph $G$ written as $|G|$ is the number of vertices in $G$. For a vertex $v \in V(G)$, the \emph{neighborhood} of $v$ denoted by $N(v)$ is the set of vertices adjacent to $v$. The \emph{distance $d(u,v)$} between two vertices $u$ and $v$ is the length of a shortest path joining $u$ and $v$. The \emph{detour distance} between two vertices $u$ and $v$ denoted by $D(u,v)$ is the length of a longest path joining $u$ and $v$ (refer \cite{Chartrand3} for more details on it). The \emph{diameter} of a graph $G$ denoted by $\diam(G)$ or simply $d$ is $\max\{d(u,v) : u, v \in V(G)\}$. The \emph{eccentricity} $\epsilon(v)$ of a vertex $v \in V(G)$ is the distance from $v$ to a vertex farthest from $v$. The \emph{center} $C(G)$ of graph $G$ is the subgraph of $G$ induced by the vertex/vartices of $G$ whose eccentricity is minimum. Moreover, for standard graph theoretic terminology and notation we follow \cite{West}.

A \emph{hamiltonian coloring} $h$ of a graph $G$, introduced by Chartrand \emph{et al.} in \cite{Chartrand1}, is a mapping $h : V(G) \rightarrow \{0, 1, 2,...\}$ such that for every pair of distinct vertices $u,v$ of $G$,
\be\label{hc:dfn} D(u,v)+|h(u)-h(v)| \geq n-1. \ee
The \emph{span of $h$}, denoted by $\span(h)$, is defined as max$\{|h(u)-h(v)| : u, v \in V(G)\}$. The \emph{hamiltonian chromatic number} $\hc(G)$ of $G$ is
\bean \hc(G) := \mbox{min}\{\span(h)\} \eean
with minimum taken  over all hamiltonian coloring $h$ of $G$. A hamiltonian coloring $h$ of $G$ is called \emph{optimal} if $\span(h)$ = $\hc(G)$.

It is clear from definition that if $G$ contains a hamiltonian $uv$-path between two distinct vertices $u$ and $v$ then the same color can be assigned to both $u$ and $v$. Hence, a graph $G$ is hamiltonian-connected if and only if $G$ can be hamiltonian colored by a single color. In \cite{Chartrand1}, Chartrand \emph{et al.} proved that for any two integers $j$ and $n$ with $2 \leq j \leq (n+1)/2$ and $n \geq 6$, there is a hamiltonian graph of order $n$ with hamiltonian chromatic number $n-j$. Thus the hamiltonian chromatic number of a connected graph $G$ measures how close $G$ is to being hamiltonian-connected. Without loss of generality we allow 0 as a color in the definition of the hamiltonian coloring, then the \emph{span} of any hamiltonian coloring $h$ is the maximum integer used for coloring while in \cite{Chartrand1,Chartrand2,Shen} only positive integers are used as colors. Therefore, the hamiltonian chromatic number defined in this article is one less than that defined in \cite{Chartrand1,Chartrand2,Shen} and hence we will make necessary adjustment when we present the results of \cite{Chartrand1,Chartrand2,Shen} in this article.

The hamiltonian chromatic number was introduced by Chartrand \emph{et al.} in \cite{Chartrand1} as a variation of \emph{radio antipodal coloring} of graphs but it is less explored compare to it. A very few results presented by researchers for hamiltonian chromatic number of graphs. The hamiltonian chromatic number of some well-known graph families is determined by Chartrand \emph{et al.} in \cite{Chartrand1} which are as follows: $\hc(K_{n}) = 0$ for $n \geq 1$, $\hc(C_{n}) = n-3$ for $n \geq 3$, $\hc(K_{1,n-1}) = (n-2)^{2}$ for $n \geq 3$, $\hc(K_{r,r}) = r-1$ for $r \geq 1$, $\hc(K_{r,s}) = (s-1)^{2}-(r-1)^{2}-1$ for $2 \leq r <s$ and gave an upper bound for $\hc(P_{n})$. However, it is noted that $hc(P_{n})$ is same as radio antipodal number $ac(P_{n})$ which is determined in \cite{Khennoufa}. They proved that if $T$ is a spanning tree of a connected graph $G$ then $\hc(G) \leq \hc(T)$ and for any tree $T$ of order $n \geq 2$, $\hc(T) \leq (n-2)^{2}$. They also proved that for any two integers $j$ and $n$ with $2 \leq j \leq (n+1)/2$ and $n \geq 6$, there is a hamiltonian graph of order $n$ with hamiltonian chromatic number $n-j$. In \cite{Chartrand2}, the same group of authors gave a lower bound for circumference of $G$ (the circumference of a graph $G$ is the length of a longest cycle in $G$) is given in terms of the number of vertices that receive colors between two specified colors in a hamiltonian coloring of $G$. The authors also proved that for a connected graph $G$ of order $n \geq 3$, if $(n+2)/2$ vertices receive the same color in a hamiltonian coloring then $G$ is hamiltonian. In \cite{Shen}, Shen \emph{et al.} determined the hamiltonian chromatic number of graph $G$ with $\max\{D(u,v) : u, v \in V(G), u \neq v\} \leq n/2$ and illustrated the result with a special class of caterpillars and double stars.

This paper is organized as follows. In section \ref{sec:pre}, we define all necessary terms, notations and terminologies for present work. In section \ref{sec:lb}, we give improved lower bound for the hamiltonian chromatic number of trees and present a necessary and sufficient condition to achieve the improved lower bound. We determine the hamiltonian chromatic number of two families of trees in section \ref{sec:vs}. In this section, we show that for a special type of broom trees (see section \ref{subsec:bk} for definition and detail on it) the lower bound given in present work is better than one given in \cite[Theorem 4]{Bantva1}. We explain the strength of our results in concluding remarks section.

\section{Preliminaries}\label{sec:pre}

A tree $T$ is a connected graph that contains no cycle. In \cite{Liu} and later used in \cite{Bantva}, the \emph{weight of $T$} from $v \in V(T)$ is defined as $w_{T}(v)$ = $\sum_{u \in V(T)}d(u,v)$ and the \emph{weight of $T$} as $w(T)$ = min$\{w_{T}(v) : v \in V(T)\}$. A vertex $v \in V(T)$ is a \emph{weight center} of $T$ if $w_{T}(v)$ = $w(T)$. Denote the set of wight centers of $T$ by $W(T)$. It was proved in \cite{Liu} that every tree $T$ has either one or two weight centers, and $T$ has two weight centers, say $W(T) = \{w,w'\}$, if and only if $w$ and $w'$ are adjacent and $T-\{ww'\}$ consists of two equal-sized components. We set $W(T) = \{w\}$ as a root if $T$ has only one weight center $w$ and $W(T) = \{w,w'\}$ as a root if $T$ has two adjacent weight centers $w$ and $w'$. In either case, if a vertex $u$ is on the path joining weight center and a vertex $v \in V(T)$ then $u$ is called \emph{ancestor} of $v$ and $v$ is called \emph{descendent} of $u$. If $u$ is ancestor of $v$ which is adjacent to $v$ then $u$ is called \emph{parent} of $v$ and $v$ is called a \emph{child} of $u$. The subtree induced by a child $u$ of a weight center and all descendent of $u$ is called \emph{branch} at $u$. Two branches are called \emph{different} if they are at two vertices adjacent to the same weight center, and \emph{opposite} if they are at two vertices adjacent to different weight centers. Note that the concept of opposite branches occurs only when $T$ has two weight centers.

Define
$$
\L(u) := \min\{D(u,w) : w \in W(T)\}, u \in V(T)
$$
to indicate the \emph{detour level} of $u$ in $T$. Define the \emph{total detour level of $T$} as
$$
\L_{W}(T) := \displaystyle\sum_{u \in V(T)} \L(u).
$$

For any $u, v \in V(T)$, define
\bean\label{def:phi}
\phi(u,v) := \mbox{max}\{\L(t) : t \mbox{ is a common ancestor of $u$ and $v$}\},
\eean
\begin{eqnarray*}\label{def:delta}
\delta(u,v) := \left\{
  \begin{array}{ll}
    1, & \hbox{If $W(T) = \{w,w'\}$ and $uv$-path contains the edge $ww'$}, \\
    0, & \hbox{otherwise.}
  \end{array}
\right.
\end{eqnarray*}

\begin{Lemma}\label{lem:phi}~Let $T$ be a tree with diameter $d \geq 2$. Then for any $u, v \in V(T)$, the following hold:
\begin{enumerate}[\rm (a)]
  \item $\phi(u,v) \geq 0$;
  \item $\phi(u,v)$ = 0 if and only if $u$ and $v$ are in different or opposite branches;
  \item $\delta(u,v)$ = 1 if and only if $T$ has two weight centers and $u$ and $v$ are in opposite branches.
  \item the detour distance $D(u,v)$ in $T$ between $u$ and $v$ can be expressed as \be\label{eq:duv} D(u,v) = \L(u) + \L(v) -2\phi(u,v)+
  \delta(u,v).\ee
\end{enumerate}
\end{Lemma}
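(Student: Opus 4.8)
The plan is to prove the four parts of Lemma~\ref{lem:phi} in order, since later parts lean on the earlier ones. For part~(a), I would note that every vertex $u\in V(T)$ has at least one common ancestor with every vertex $v$: namely a weight center. Indeed, in the rooted structure the weight center(s) are ancestors of every vertex, so the set $\{t : t\text{ is a common ancestor of }u\text{ and }v\}$ is nonempty, and $\L(t)\ge 0$ for all $t$ by definition of $\L$; hence the maximum defining $\phi(u,v)$ is over a nonempty set of nonnegative numbers, giving $\phi(u,v)\ge 0$.

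For part~(b), I would argue both directions. If $u$ and $v$ lie in different or opposite branches, then the only common ancestors they can have are weight centers (any vertex deeper than a weight center lies in a single branch, which would then have to contain both $u$ and $v$, contradicting that they are in different/opposite branches); since $\L$ of a weight center is $0$, we get $\phi(u,v)=0$. Conversely, if $u$ and $v$ are in the same branch at a child $x$ of a weight center, then $x$ is a common ancestor with $\L(x)=D(x,w)\ge 1$ for the relevant weight center $w$ (as $x$ is adjacent to $w$, and using $d\ge 2$ to ensure branches are nonempty/nontrivial so this makes sense), forcing $\phi(u,v)\ge 1>0$; and if one of $u,v$ is itself a weight center or an ancestor of the other, a similar direct check gives $\phi(u,v)>0$ unless they are in different/opposite branches. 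Part~(c) is essentially a restatement of the definition of $\delta$ together with the characterization of when a $uv$-path uses the central edge $ww'$: the $uv$-path in a tree is unique, and it contains $ww'$ precisely when $u$ and $v$ are separated by that edge, i.e.\ lie in components on opposite sides of $ww'$, which is exactly the condition that they are in opposite branches (and this requires two weight centers).

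The main work is part~(d), the formula $D(u,v)=\L(u)+\L(v)-2\phi(u,v)+\delta(u,v)$. Since $T$ is a tree, $D(u,v)=d(u,v)$ is just the length of the unique $uv$-path. I would split into cases according to the position of $u$ and $v$. \textbf{Case 1:} $u,v$ in the same branch. Let $t$ be their lowest common ancestor; one checks $\L(t)=\phi(u,v)$ (the deepest common ancestor has the largest detour level), $\delta(u,v)=0$, and the $uv$-path decomposes as the path from $u$ up to $t$ followed by the path from $t$ down to $v$, of lengths $\L(u)-\L(t)$ and $\L(v)-\L(t)$ respectively, summing to $\L(u)+\L(v)-2\phi(u,v)$. \textbf{Case 2:} $u,v$ in different branches at the same weight center $w$. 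Then $\phi(u,v)=0$, $\delta(u,v)=0$, and the $uv$-path goes from $u$ up to $w$ (length $\L(u)$) then down to $v$ (length $\L(v)$), giving $\L(u)+\L(v)$. \textbf{Case 3:} $u,v$ in opposite branches (so $W(T)=\{w,w'\}$); here $\phi(u,v)=0$, $\delta(u,v)=1$, and the path runs $u\to w\to w'\to v$ (or the mirror), of length $\L(u)+1+\L(v)$. A few degenerate sub-cases (one of $u,v$ equal to a weight center, or an ancestor of the other) should be absorbed into Case~1 by allowing $t\in\{u,v\}$. The one subtlety to handle carefully is the interaction between $\L(u)=\min\{D(u,w):w\in W(T)\}$ and which weight center the $uv$-path actually passes through when there are two weight centers: I would verify that for a vertex $u$ in a branch at a child of $w$, the minimizing weight center is $w$ itself and $D(u,w)=d(u,w)$ equals the depth of $u$ in its branch plus one, so that $\L$ genuinely records the quantity appearing in the path-length bookkeeping. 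I expect this case analysis to be routine once the lowest-common-ancestor decomposition of the unique $uv$-path is set up; the only real obstacle is being careful and exhaustive about the boundary cases involving the weight center(s) themselves.
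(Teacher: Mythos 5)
The paper itself states Lemma \ref{lem:phi} without proof (it is treated as routine bookkeeping, in the spirit of the analogous distance formula used in the radio-number literature), so there is no official argument to compare against; judged on its own, your proof is correct and is exactly the natural argument one would expect: reduce $D(u,v)$ to $d(u,v)$ via uniqueness of paths in a tree, decompose the $uv$-path at the lowest common ancestor, and observe that $\L$ increases along root-to-leaf paths so the deepest common ancestor realizes $\phi(u,v)$, with the three cases (same branch, different branches at one weight center, opposite branches across the edge $ww'$) giving the three shapes of the formula, and $\delta$ accounting for the extra edge $ww'$. Your identification of the one genuine subtlety --- that for a vertex in a branch on the $w$-side the minimum in $\L(u)=\min\{D(u,w):w\in W(T)\}$ is attained at $w$, so $\L$ really is the depth used in the path-length count --- is the right thing to check. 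The only imprecision is in your closing remark for part (b): if one of $u,v$ is itself a weight center, the ``direct check'' gives $\phi(u,v)=0$ even though the pair is not in different or opposite branches (and similarly the ``only if'' in (c) degenerates when $u$ or $v$ is a weight center, e.g.\ $u=w$, $v=w'$); this is really a looseness in the lemma's own phrasing rather than in your argument, and it does not affect the identity (d), which you verify correctly in all cases including the degenerate ones, but you should either exclude weight centers from (b)--(c) or state the convention that such pairs are counted as lying in different/opposite branches.
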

Note that the detour distance $D(u,v)$ is same as the ordinary distance $d(u,v)$ between any two vertices $u$ and $v$ in a tree $T$ but we continue to use this terminology to remain consistent with concept of hamiltonian coloring in general.

Define
\begin{eqnarray*}
\zeta(T) := \left\{
  \begin{array}{ll}
    0, & \hbox{If $W(T) = \{w\}$,} \\
    1, & \hbox{If $W(T) = \{w,w'\}$},
  \end{array}
\right.
\end{eqnarray*}

and
$$\zeta'(T) := 1-\zeta(T).$$

\section{A lower bound for $\hc(T)$}\label{sec:lb}

In this section, we continue to use terms, notations and terminologies defined in previous section.

A hamiltonian coloring $h$ of a tree $T$ is injective if $T$ has at least one vertex of degree 3, that is, $T$ is not a path as in this case no two vertices of trees contain hamiltonian path. So we assume all trees with at least one vertex of degree 3 through out this discussion unless otherwise specified. Observe that a hamiltonian coloring $h$ on $V(T)$, induces an ordering of $V(T)$, which is a line-up of the vertices with increasing images. We denote this ordering by $V(G) = \{x_{0},x_{1},x_{2},...,x_{n-1}\}$ with
$$
0 = h(x_{0}) < h(x_{1}) < ... < h(x_{n-1}) = \span(h).
$$

The next result gives a lower bound for the hamiltonian chromatic number of trees.

\begin{Theorem}\label{thm:lower}~Let $T$ be a tree of order $n \geq 4$ and $\Delta(T) \geq 3$. Then
\be\label{hc:lower} \hc(T) \geq (n-1)(n-1-\zeta(T))+\zeta'(T)-2\L_{W}(T). \ee
\end{Theorem}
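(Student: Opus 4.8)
The plan is to use the hamiltonian coloring inequality \eqref{hc:dfn} on consecutive pairs in the induced ordering $x_0, x_1, \dots, x_{n-1}$ and then sum the resulting bounds, exploiting the detour-distance formula \eqref{eq:duv} from Lemma \ref{lem:phi}. For each $i \in \{0,1,\dots,n-2\}$, applying \eqref{hc:dfn} to the pair $x_i, x_{i+1}$ and using that $h(x_{i+1}) > h(x_i)$ gives $h(x_{i+1}) - h(x_i) \geq n-1 - D(x_i, x_{i+1})$. Summing over all $i$ telescopes the left side to $h(x_{n-1}) - h(x_0) = \span(h)$, so
\[
\span(h) \;\geq\; (n-1)^2 \;-\; \sum_{i=0}^{n-2} D(x_i, x_{i+1}).
\]
The task then reduces to finding a good upper bound for $\sum_{i=0}^{n-2} D(x_i, x_{i+1})$ in terms of $\L_W(T)$ and the $\zeta$-quantities.

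Next I would substitute \eqref{eq:duv}, writing each $D(x_i, x_{i+1}) = \L(x_i) + \L(x_{i+1}) - 2\phi(x_i, x_{i+1}) + \delta(x_i, x_{i+1})$. The $\L$-terms accumulate: each $\L(x_j)$ appears in the sum for the pairs $(x_{j-1}, x_j)$ and $(x_j, x_{j+1})$, i.e. twice, except the endpoints $\L(x_0)$ and $\L(x_{n-1})$ which appear once. Hence $\sum_{i=0}^{n-2}\big(\L(x_i) + \L(x_{i+1})\big) = 2\L_W(T) - \L(x_0) - \L(x_{n-1})$. Since $\phi \geq 0$ and $\delta(x_i,x_{i+1}) \leq \zeta(T)$ (the $\delta$ term can only be nonzero when $T$ has two weight centers, in which case it is at most $1 = \zeta(T)$), we get
\[
\sum_{i=0}^{n-2} D(x_i, x_{i+1}) \;\leq\; 2\L_W(T) - \L(x_0) - \L(x_{n-1}) + (n-1)\zeta(T).
\]
Combining with the telescoped bound yields $\hc(T) = \span(h) \geq (n-1)^2 - 2\L_W(T) + \L(x_0) + \L(x_{n-1}) - (n-1)\zeta(T)$, and since $(n-1)^2 - (n-1)\zeta(T) = (n-1)(n-1-\zeta(T))$, it remains only to show $\L(x_0) + \L(x_{n-1}) \geq \zeta'(T)$.

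The final point is where the two cases of $\zeta$ must be handled. If $T$ has a single weight center $w$, then $\zeta'(T) = 1$, and I need $\L(x_0) + \L(x_{n-1}) \geq 1$: this holds because at most one vertex of $T$ (namely $w$ itself) has detour level $0$, so not both $x_0$ and $x_{n-1}$ can have $\L = 0$, forcing the sum to be at least $1$. If $T$ has two weight centers, $\zeta'(T) = 0$ and the inequality $\L(x_0) + \L(x_{n-1}) \geq 0$ is immediate from Lemma \ref{lem:phi}(a)-style nonnegativity of $\L$.

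The main obstacle I anticipate is not any single step but rather being careful with the two bookkeeping subtleties: first, confirming that $\delta(x_i, x_{i+1}) \leq \zeta(T)$ holds pairwise and that replacing it by $\zeta(T)$ in all $n-1$ summands is legitimate (it is, since $\delta \in \{0,1\}$ and $\delta$ can be positive only when $\zeta(T) = 1$); and second, the edge-counting argument that gives the coefficient $2$ on $\L_W(T)$ together with the correction terms $-\L(x_0) - \L(x_{n-1})$ — getting these endpoint corrections right is exactly what produces the $+\zeta'(T)$ refinement over the cruder bound. Everything else is a direct summation.
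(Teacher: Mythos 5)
Your proposal is correct and follows essentially the same route as the paper's (deferred) proof: order the vertices by an optimal coloring, telescope the consecutive-pair inequalities, expand each $D(x_i,x_{i+1})$ via Lemma \ref{lem:phi}(d) with $\phi\geq 0$ and $\delta\leq\zeta(T)$, and absorb the endpoint terms $\L(x_0)+\L(x_{n-1})\geq\zeta'(T)$. The bookkeeping details you flag (the coefficient $2$ on $\L_W(T)$, the endpoint correction giving $+\zeta'(T)$, and injectivity of $h$ when $\Delta(T)\geq 3$) are all handled correctly.
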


The proof is similar to that of \cite[Theorem 4]{Bantva1} except only one change which is $\L(T)$ is replaced by $\L_{W}(T)$ but it should be noted that technically this lower bound is more useful than one given in \cite[Theorem 4]{Bantva1}. We support this fact by giving an example in section 4.

\begin{Theorem}\label{thm:main}~Let $T$ be a tree of order $n \geq 4$ and $\Delta(T) \geq 3$. Then
\be\label{hc:main} \hc(T) \geq (n-1)(n-1-\zeta(T))+\zeta'(T)-2\L_{W}(T) \ee
holds if and only if there exist an ordering $\{x_{0},x_{1},\ldots,x_{n-1}\}$ of the vertices of $T$, with $\L(x_{0})$ = 0 and $\L(x_{n-1})$ = 1 when $W(T) = \{w\}$ and $\L(x_{0})$ = $\L(x_{n-1})$ = 0 when $W(T) = \{w,w'\}$, such that for all $0 \leq i < j \leq n-1$,
\be\label{eq:dij} D(x_{i},x_{j}) \geq \displaystyle\sum_{t=i}^{j-1}\(\L(x_{t})+\L(x_{t+1})\)-(j-i)(n-1-\zeta(T))+(n-1). \ee
Moreover, under this condition the mapping $h$ defined by
\be\label{eq:f0} h(x_{0}) = 0 \ee
\be\label{eq:f1} h(x_{i+1}) = h(x_{i})+n-1-\zeta(T)-\L(x_{i})-\L(x_{i+1}).\ee
\end{Theorem}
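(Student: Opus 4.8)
The plan is to prove the two directions separately, with the key identity being Lemma~\ref{lem:phi}(d), which lets us translate every detour-distance condition into a statement about detour levels $\L(x_t)$ and common-ancestor terms $\phi(x_i,x_j)$. First I would establish the forward direction: suppose the bound \eqref{hc:main} is attained by an optimal hamiltonian coloring $h$, and let $\{x_0,\dots,x_{n-1}\}$ be the induced ordering with $0 = h(x_0) < \cdots < h(x_{n-1}) = \hc(T)$. The starting observation is that the defining inequality \eqref{hc:dfn} applied to consecutive vertices $x_t,x_{t+1}$ gives $h(x_{t+1}) - h(x_t) \geq n-1 - D(x_t,x_{t+1})$, and summing these telescoping inequalities from $i$ to $j-1$ yields $h(x_j) - h(x_i) \geq \sum_{t=i}^{j-1}(n-1-D(x_t,x_{t+1}))$. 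On the other hand, applying \eqref{hc:dfn} directly to the pair $x_i,x_j$ gives $h(x_j) - h(x_i) \geq n-1 - D(x_i,x_j)$. The equality-forcing argument should show that when the global span equals the claimed lower bound, essentially every inequality used in the derivation of Theorem~\ref{thm:lower} must be tight; in particular the consecutive gaps must be exactly $h(x_{t+1}) - h(x_t) = n-1-\zeta(T) - \L(x_t) - \L(x_{t+1})$, which is precisely \eqref{eq:f1}, and the level constraints on $x_0, x_{n-1}$ must hold (these come from examining which vertices can carry the extreme colors $0$ and $\span(h)$ without wasting span). Given those forced gaps, the inequality $h(x_j) - h(x_i) \geq n-1 - D(x_i,x_j)$ rearranges termwise into \eqref{eq:dij} after substituting the summed formula for $h(x_j)-h(x_i)$.

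For the converse, I would assume the ordering with the stated boundary level conditions and inequality \eqref{eq:dij} exists, and define $h$ by \eqref{eq:f0}–\eqref{eq:f1}. The first task is to verify $h$ is well-defined and nonnegative, i.e. that the increments $n-1-\zeta(T)-\L(x_t)-\L(x_{t+1})$ are positive; this follows because $\L(x_t) \leq \lfloor (n-1)/2 \rfloor$ or a similar bound holds for detour levels in a tree of order $n$ (using that the branch sizes are balanced around the weight center). Then I would compute $\span(h) = h(x_{n-1}) = \sum_{t=0}^{n-2}(n-1-\zeta(T)-\L(x_t)-\L(x_{t+1}))$, which telescopes: the sum of the constant terms is $(n-1)(n-1-\zeta(T))$, and the sum $\sum_{t=0}^{n-2}(\L(x_t)+\L(x_{t+1})) = 2\L_W(T) - \L(x_0) - \L(x_{n-1})$ since every interior vertex's level is counted twice and the two endpoints once. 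Plugging in the boundary conditions — $\L(x_0)+\L(x_{n-1}) = 1$ when $|W(T)|=1$ (since then $\zeta'(T)=1$) and $= 0$ when $|W(T)|=2$ (then $\zeta'(T)=0$) — gives exactly $\span(h) = (n-1)(n-1-\zeta(T)) + \zeta'(T) - 2\L_W(T)$, matching the lower bound; so it only remains to check $h$ is a valid hamiltonian coloring.

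To check validity, take any $i < j$ and verify $D(x_i,x_j) + |h(x_i)-h(x_j)| \geq n-1$. Since $h$ is increasing along the ordering, $|h(x_j)-h(x_i)| = h(x_j)-h(x_i) = \sum_{t=i}^{j-1}(n-1-\zeta(T)-\L(x_t)-\L(x_{t+1})) = (j-i)(n-1-\zeta(T)) - \sum_{t=i}^{j-1}(\L(x_t)+\L(x_{t+1}))$; adding $D(x_i,x_j)$ and invoking hypothesis \eqref{eq:dij} immediately gives the required $\geq n-1$. That final step is essentially a direct substitution, so the real work — and the main obstacle — is the forward direction: pinning down precisely which inequalities in the proof of Theorem~\ref{thm:lower} become equalities, and in particular proving that the endpoints $x_0$ and $x_{n-1}$ are forced to have the stated detour levels rather than merely that some ordering with small total level exists. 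I expect this requires a careful accounting of the "loss" term $\sum 2\phi(x_i,x_{i+1}) - \text{(correction)}$ accumulated along the ordering and an argument that any deviation from the claimed level profile strictly increases the span. The positivity of the color increments and the non-negativity of $h$ may also need a short separate lemma about the maximum possible detour level in a tree, but I would treat that as routine.
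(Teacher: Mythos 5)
Your proposal follows essentially the same route as the paper's own proof: necessity by arguing that attaining the bound forces each consecutive gap to equal $n-1-\zeta(T)-\L(x_{i})-\L(x_{i+1})$ (together with the stated endpoint levels) and then summing and invoking $h(x_{j})-h(x_{i})\geq n-1-D(x_{i},x_{j})$ to get \eqref{eq:dij}; sufficiency by defining $h$ via \eqref{eq:f0}--\eqref{eq:f1}, checking the coloring condition by direct substitution of \eqref{eq:dij}, and telescoping to get the span $(n-1)(n-1-\zeta(T))+\zeta'(T)-2\L_{W}(T)$. The equality-forcing step you single out as the main obstacle is handled no more elaborately in the paper (it simply asserts that optimality forces $\phi(x_{i},x_{i+1})=0$, $\delta(x_{i},x_{i+1})=\zeta(T)$ and the endpoint level conditions), and the positivity of the increments you worry about follows at once from \eqref{eq:dij} with $j=i+1$ together with $D(x_{i},x_{i+1})\leq n-1$, so your outline matches the paper's argument.
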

\begin{proof}~\textsf{Necessity:} Suppose that \eqref{hc:main} holds. Let $h$ be an optimal hamiltonian coloring of $T$ then $h$ induces an ordering of vertices of $T$, say 0 = $h(x_{0}) < h(x_{1}) < ... < h(x_{n-1})$. The span of $h$ is $\span(h)$ = $\hc(T)$ = $(n-1)(n-1-\zeta(T))+\zeta'(T)-2\L_{W}(T)$. Note that this is possible if equality holds in \eqref{hc:dfn} together with (a) $\L(x_{0})$ = 0, $\L(x_{n-1})$ = 1 and $\phi(x_{i},x_{i+1})$ = $\delta(x_{i},x_{i+1})$ = 0 when $T$ has only one weight center, (b) $\L(x_{0})$ = $\L(x_{n-1})$ = 0 and $\phi(x_{i},x_{i+1})$ = 0 and $\delta(x_{i},x_{i+1})$ = 1 when $T$ has two adjacent weight centers. Note that this turn the definition of hamiltonian coloring $h(x_{i+1})-h(x_{i})$ = $n-1-D(x_{i},x_{i+1})$ to $h(x_{0})$ = 0 and $h(x_{i+1})$ = $h(x_{i})+n-1-\zeta(T)-\L(x_{i})-\L(x_{i+1})$ for $0 \leq i \leq n-2$. Moreover, for any two vertices $x_{i}$ and $x_{j}$ (without loss of generality, assume $j > i$), summing the latter equality for index $i$ to $j$, we have
\bean
h(x_{j})-h(x_{i}) & = & \displaystyle\sum_{t=i}^{j-1} [n-1-\zeta(T)-\L(x_{t})-\L(x_{t+1})].
\eean
Now $h$ is a hamiltonian coloring so that $h(x_{j})-h(x_{i}) \geq n-1-D(x_{i},x_{j})$ which turn the above equation in the following form.
\bean
D(x_{i},x_{j}) \geq \displaystyle\sum_{t=i}^{j-1}[\L(x_{t})+\L(x_{t+1})] - (j-i)(n-1-\zeta(T))+(n-1).
\eean
\textsf{Sufficiency:} Suppose that an ordering $\{x_{0},x_{1},\ldots,x_{n-1}\}$ of vertices of $T$ satisfies \eqref{eq:dij}, and $h$ is defined by \eqref{eq:f0} and \eqref{eq:f1} together with $\L(x_{0})$ = 0, $\L(x_{n-1})$ = 1 when $W(T) = \{w\}$ and $\L(x_{0})$ = $\L(x_{n-1})$ = 0 when $W(T) = \{w,w'\}$. Note that it is enough to prove that $h$ is a hamiltonian coloring with span equal to the right-hand side of \eqref{hc:main}. Let $x_{i}$ and $x_{j}$ ($j > i$) be two arbitrary vertices then by \eqref{eq:f1}, we have
\bean
h(x_{j})-h(x_{i}) & = & (j-i)(n-1-\zeta(T)) - \displaystyle\sum_{t=i}^{j-1} [\L(x_{t})+\L(x_{t+1})]
\eean
Now an ordering satisfies \eqref{eq:dij} which turn the above equation in the following form which shows that $h$ is a hamiltonian coloring.
\bean
h(x_{j})-h(x_{i}) \geq n - 1 - D(x_{i}, x_{j}).
\eean
The span of $h$ is given by
\bean
\span(h) & = & h(x_{n-1}) - h(x_{0}) \\
& = & \displaystyle\sum_{i=0}^{n-2} \(h(x_{i+1})-h(x_{i})\) \\
& = & (n-1)(n-1-\zeta(T)) - \displaystyle\sum_{i=0}^{n-2} (\L(x_{i})+\L(x_{i+1})) \\
& = & (n-1)(n-1-\zeta(T)) - 2\L_{W}(T) + \L(x_{0}) + \L(x_{n-1}) \\
& = & (n-1)(n-1-\zeta(T)) + \zeta'(T) - 2\L_{W}(T)
\eean
which complete the proof.
\end{proof}

Note that the following results gives sufficient conditions with optimal hamiltonian coloring for the equality in \eqref{hc:lower}. Note that these results are identical with \cite[Theorem 5]{Bantva1} and \cite[Corollary 1]{Bantva1} when $W(T)$ = $C(T)$ for a tree $T$. But it should be note that these results are more efficient than \cite[Theorem 5]{Bantva1} and \cite[Corollary 1]{Bantva1} (see the case of broom trees in section 4.2).

\begin{Theorem}\label{thm:s1}~Let $T$ be a tree of order $n \geq 4$ and $\Delta(T) \geq 3$. Then
\be\label{hc:s1} \hc(T) = (n-1)(n-1-\zeta(T))+\zeta'(T)-2\L_{W}(T) \ee
holds if there exist an ordering $\{x_{0},x_{1},\ldots,x_{n-1}\}$ of the vertices of $T$ such that for all $0 \leq i \leq n-2$,
\begin{enumerate}[\rm (a)]
\item $\L(x_{0}) + \L(x_{n-1})$ = 1 when $W(T) = \{w\}$ and $\L(x_{0}) + \L(x_{n-1})$ = 0 when $W(T) = \{w,w'\}$,
\item $x_{i}$ and $x_{i+1}$ are in different branches when $W(T) = \{w\}$ and in opposite branches when $W(T) = \{w,w'\}$,
\item $D(x_{i},x_{i+1}) \leq n/2$.
\end{enumerate}
Moreover, under these conditions the mapping $h$ defined by \eqref{eq:f0} and \eqref{eq:f1} is an optimal hamiltonian coloring of $T$.
\end{Theorem}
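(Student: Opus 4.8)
The plan is to derive \eqref{hc:s1} from Theorem~\ref{thm:main}. Since Theorem~\ref{thm:lower} already gives the inequality $\hc(T) \geq (n-1)(n-1-\zeta(T))+\zeta'(T)-2\L_{W}(T)$, it suffices to check that an ordering $\{x_0,\dots,x_{n-1}\}$ satisfying (a)--(c) meets the hypotheses of Theorem~\ref{thm:main}: the prescribed boundary values of $\L(x_0)$ and $\L(x_{n-1})$, and inequality \eqref{eq:dij} for all $0 \le i < j \le n-1$. Theorem~\ref{thm:main} will then yield both the equality \eqref{hc:s1} and the fact that the map $h$ of \eqref{eq:f0}--\eqref{eq:f1} attains it, hence is an optimal hamiltonian coloring. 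The boundary condition is essentially restated by (a): detour levels are nonnegative integers, so $\L(x_0)+\L(x_{n-1})=1$ forces $\{\L(x_0),\L(x_{n-1})\}=\{0,1\}$ in the one weight center case, while $\L(x_0)+\L(x_{n-1})=0$ forces $\L(x_0)=\L(x_{n-1})=0$ in the two weight center case; reversing the ordering if necessary (this preserves (a), (b) and (c), since the endpoint set, the branch relation of each consecutive pair, and each $D(x_i,x_{i+1})$ are all unchanged) we may assume $\L(x_0)=0$ and $\L(x_{n-1})=\zeta'(T)$.

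The substantive step is \eqref{eq:dij}. First I would treat consecutive vertices: by (b) together with parts (b), (c) and (d) of Lemma~\ref{lem:phi}, $x_i$ and $x_{i+1}$ lie in different branches when $W(T)=\{w\}$ and in opposite branches when $W(T)=\{w,w'\}$, so $\phi(x_i,x_{i+1})=0$ and $\delta(x_i,x_{i+1})=\zeta(T)$, and hence \eqref{eq:duv} gives
\[
D(x_i,x_{i+1}) = \L(x_i)+\L(x_{i+1})+\zeta(T),
\]
which is exactly \eqref{eq:dij} with equality in the case $j=i+1$. Substituting $\L(x_t)+\L(x_{t+1}) = D(x_t,x_{t+1})-\zeta(T)$ into the right-hand side of \eqref{eq:dij} cancels all the $\zeta(T)$ contributions, and for arbitrary $j>i$ inequality \eqref{eq:dij} becomes equivalent to
\[
D(x_i,x_j) \geq \sum_{t=i}^{j-1} D(x_t,x_{t+1}) - (j-i-1)(n-1).
\]

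It remains to check this inequality for $j-i\ge 2$. Condition (c) bounds every summand by $n/2$, so its right-hand side is at most $(j-i)\tfrac{n}{2}-(j-i-1)(n-1) = 1-\tfrac{(j-i-2)(n-2)}{2}\le 1$, using $n\ge 4$ and $j-i\ge 2$; and $x_i\ne x_j$ gives $D(x_i,x_j)\ge 1$, so the inequality holds. This completes the verification of \eqref{eq:dij}, and Theorem~\ref{thm:main} finishes the argument. Most of the work is routine bookkeeping with Lemma~\ref{lem:phi} and the reversal symmetry of (a)--(c); the step that needs genuine care is the reduction above and the accompanying estimate --- in particular, confirming that for $j-i\ge 2$ the crude bound $D(x_i,x_j)\ge 1$ is all that is needed (the estimate is tight precisely when $j=i+2$ and $n$ is even), so that no sharper lower bound on the detour distance $D(x_i,x_j)$ has to be invoked.
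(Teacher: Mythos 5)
Your proposal is correct and follows essentially the same route as the paper: invoke Theorem~\ref{thm:main} (with the lower bound of Theorem~\ref{thm:lower}), use conditions (b) and (c) together with Lemma~\ref{lem:phi} to get $D(x_i,x_{i+1})=\L(x_i)+\L(x_{i+1})+\zeta(T)\leq n/2$, and then bound the right-hand side of \eqref{eq:dij} by $1\leq D(x_i,x_j)$ for $j-i\geq 2$. Your explicit handling of the $j=i+1$ case and of the reversal needed to match the boundary conditions of Theorem~\ref{thm:main} only makes explicit what the paper leaves implicit.
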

\begin{proof}~Suppose there exist an ordering $\{x_{0},x_{1},\ldots,x_{n-1}\}$ of vertices of $T$ such that (a), (b) and (c) holds and $h$ is defined by \eqref{eq:f0} and \eqref{eq:f1}. By Theorem \ref{thm:main}, it is enough to prove that an ordering $\{x_{0},x_{1},\ldots,x_{n-1}\}$ satisfies \eqref{eq:dij}. Let $x_{i}$ and $x_{j}$ be two arbitrary vertices, where $0 \leq i < j \leq n-1$. Without loss of generality, we assume that $j-i \geq 2$ and for simplicity let the right-hand side of \eqref{eq:dij} is $x_{i,j}$. Then, we obtain
\bean
x_{i,j} & = & \displaystyle\sum_{t=i}^{j-1} [\L(x_{t})+\L(x_{t+1})] - (j-i)(n-1-\zeta(T)) + (n-1) \\
& \leq & (j-i)(n/2-\zeta(T))-(j-i)(n-1-\zeta(T))+(n-1) \\
& = & (j-i)(1-n/2)+(n-1) \\
& \leq & 2(1-n/2)+(n-1) \\
& = & 1 \leq D(x_{i},x_{j})
\eean which completes the proof.
\end{proof}

For a tree $T$ of order $n$, if max$\{d(u,v) : u, v \in V(T), u \neq v\}$ $\leq$ $n/2$ then such a tree $T$ is called a tree with \emph{maximum distance bound $n/2$} or $DB(n/2)$ tree.

\begin{Corollary}\label{thm:s2}~Let $T$ be a DB($n/2$) tree of order $n \geq 4$ and $\Delta(T) \geq 3$. Then
\be\label{hc:s2} \hc(T) = (n-1)(n-1-\zeta(T))+\zeta'(T)-2\L_{W}(T) \ee
holds if and only if there exist an ordering $\{x_{0},x_{1},\ldots,x_{n-1}\}$ of the vertices of $T$ such that for all $0 \leq i \leq n-2$, \begin{enumerate}[\rm (a)]
\item $\L(x_{0}) + \L(x_{n-1})$ = 1 when $W(T) = \{w\}$ and $\L(x_{0}) + \L(x_{n-1})$ = 0 when $W(T) = \{w,w'\}$,
\item $x_{i}$ and $x_{i+1}$ are in different branches when $W(T) = \{w\}$ and in opposite branches when $W(T) = \{w,w'\}$,
\end{enumerate}
Moreover, under these conditions the mapping $h$ defined by \eqref{eq:f0} and \eqref{eq:f1} is an optimal hamiltonian coloring of $T$.
\end{Corollary}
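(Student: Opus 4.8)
The plan is to derive Corollary~\ref{thm:s2} from Theorem~\ref{thm:s1} together with Theorem~\ref{thm:main}, exploiting the hypothesis that $T$ is a $DB(n/2)$ tree to upgrade the one-directional implication of Theorem~\ref{thm:s1} into an equivalence. For the \emph{``if''} direction, suppose an ordering $\{x_0,\ldots,x_{n-1}\}$ satisfying (a) and (b) exists. Because $T$ is a $DB(n/2)$ tree, every pair of vertices (in particular every consecutive pair $x_i,x_{i+1}$) satisfies $D(x_i,x_{i+1})=d(x_i,x_{i+1})\le n/2$, so condition (c) of Theorem~\ref{thm:s1} is automatically met. Hence all three hypotheses of Theorem~\ref{thm:s1} hold, and that theorem gives both the equality \eqref{hc:s2} and the fact that $h$ defined by \eqref{eq:f0}--\eqref{eq:f1} is an optimal hamiltonian coloring.

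For the \emph{``only if''} direction, assume \eqref{hc:s2} holds. By the necessity part of Theorem~\ref{thm:main}, there is an ordering $\{x_0,\ldots,x_{n-1}\}$ with $\L(x_0)=0,\ \L(x_{n-1})=1$ (when $W(T)=\{w\}$) or $\L(x_0)=\L(x_{n-1})=0$ (when $W(T)=\{w,w'\}$) satisfying \eqref{eq:dij} for all $0\le i<j\le n-1$; this ordering is exactly the one induced by an optimal hamiltonian coloring. Condition (a) is immediate from the endpoint detour-level values. The remaining task is to show condition (b): consecutive vertices $x_i,x_{i+1}$ lie in different branches (one weight center) or opposite branches (two weight centers). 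Applying \eqref{eq:dij} with $j=i+1$ gives
\be
D(x_i,x_{i+1}) \ge \L(x_i)+\L(x_{i+1}) - (n-1-\zeta(T)) + (n-1) = \L(x_i)+\L(x_{i+1})+\zeta(T).
\ee
On the other hand, by Lemma~\ref{lem:phi}(d), $D(x_i,x_{i+1}) = \L(x_i)+\L(x_{i+1}) - 2\phi(x_i,x_{i+1}) + \delta(x_i,x_{i+1})$. Comparing the two yields $\delta(x_i,x_{i+1}) - 2\phi(x_i,x_{i+1}) \ge \zeta(T)$. Since $\phi\ge 0$ (Lemma~\ref{lem:phi}(a)) and $\delta\in\{0,1\}$ with $\delta=1$ only possible when $\zeta(T)=1$, this forces $\phi(x_i,x_{i+1})=0$ and, when $\zeta(T)=1$, also $\delta(x_i,x_{i+1})=1$. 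By Lemma~\ref{lem:phi}(b),(c) this is precisely the statement that $x_i,x_{i+1}$ are in different branches when $W(T)=\{w\}$ and in opposite branches when $W(T)=\{w,w'\}$, i.e.\ condition (b).

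Finally, the ``moreover'' clause follows because in the ``if'' direction the optimality of $h$ is supplied directly by Theorem~\ref{thm:s1}. I expect the main (and essentially only) subtlety to be the ``only if'' direction: one must correctly extract, from the bare equality \eqref{hc:s2}, the ordering guaranteed by Theorem~\ref{thm:main} and then run the $j=i+1$ instance of \eqref{eq:dij} against the detour-distance formula of Lemma~\ref{lem:phi}(d) to pin down $\phi$ and $\delta$; the $DB(n/2)$ hypothesis plays no role there and is used only to make (c) free in the ``if'' direction. Everything else is a direct citation of the preceding results.
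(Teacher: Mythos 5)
Your proposal is correct and follows the route the paper itself intends: the corollary is stated without proof as an immediate consequence of Theorem \ref{thm:s1} (sufficiency, with condition (c) free because $D=d\le n/2$ for a $DB(n/2)$ tree) and of the necessity part of Theorem \ref{thm:main}, which is exactly how you argue. Your extraction of condition (b) from \eqref{eq:dij} at $j=i+1$ combined with Lemma \ref{lem:phi}(d), forcing $\phi(x_i,x_{i+1})=0$ and $\delta(x_i,x_{i+1})=\zeta(T)$, is the right way to make explicit the step the paper leaves implicit (modulo the paper's own harmless imprecision about the weight center itself not lying in any branch).
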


\section{Theorem 1 vs \mbox{[3, Theorem 4]}}\label{sec:vs}

In this section, we determine the hamiltonian chromatic number of two families of trees namely $A_{d}$ and $B_{d}$, where $d=2k,2k+1$ and $k \geq 1$ be any integer. We show that in case of $W(T) = C(T)$, the lower bound for $\hc(T)$ given in Theorem \ref{hc:lower} and \cite[Theorem 4]{Bantva1} are identical but in case of $W(T) \neq C(T)$ then the lower bound for $\hc(T)$ given in  Theorem \ref{hc:lower} is better than \cite[Theorem 4]{Bantva1}. Note that in case of $A_{d}$, $W(A_{d}) = C(A_{d})$ but in case of $B_{d}$, $W(B_{d}) \neq C(B_{d})$, where $d=2k,2k+1$.

\subsection{Special Trees $A_{2k}$ and $A_{2k+1}$}\label{subsec:ak}

In \cite{Bala}, a special class of trees, namely $A_{2k}$ and $A_{2k+1}$, were introduced. The trees $A_{2k}$ with diameter $2k-1$ is defined as follows. (a) $A_{2}$ is an edge $K_{2}$, (b) $A_{2k} (k \geq 2)$ is obtained from $A_{2k-2}$ in the following manner: Add three pendant vertices (two vertical and one horizontal) to the left most and the right most pendant vertices of $A_{2k-2}$; next, add a pendant vertex (vertically) to every other pendant vertex of $A_{2k-2}$. In a manner analogous to the above, define the class of trees $A_{2k+1}$($k \geq 1$) with diameter $2k$ starting with $A_{3}$ which is $K_{1,4}$. The trees $A_{2k}$ and $A_{2k+1}$ has $2k^{2}$ and $2k(k+1)+1$ vertices respectively. Note that $|W(A_{2k})| = 2$ and $|W(A_{2k+1})| = 1$. Denote $W(A_{2k}) = \{w,w'\}$ and $W(A_{2k+1}) = \{w\}$. Moreover, it is clear from definition that $A_{2k}$ and $A_{2k+1}$ are $DB(n/2)$ trees.

\begin{Theorem}\label{thm:Ad}~Let $k \geq 1$ be any integer. Then
\begin{eqnarray}\label{hc:Ad}
\hc(A_{d}) := \left\{
  \begin{array}{ll}
    \frac{2}{3}(k-1)(6k^{3}+2k^{2}-4k-3), & \hbox{If }d = 2k, \\ [0.2cm]
    \frac{1}{3}k(k+1)(3k^{2}+k-1)+1, & \hbox{If }d = 2k+1.
  \end{array}
\right.
\end{eqnarray}
\end{Theorem}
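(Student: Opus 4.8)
The plan is to verify the hypotheses of Corollary~\ref{thm:s2} for both $A_{2k}$ and $A_{2k+1}$, and then evaluate the closed-form expression on the right-hand side of \eqref{hc:s2}. Since the excerpt already records that $A_{2k}$ and $A_{2k+1}$ are $DB(n/2)$ trees, with $n = 2k^2$ and $n = 2k(k+1)+1$ respectively, and with $|W(A_{2k})| = 2$, $|W(A_{2k+1})| = 1$, the formula \eqref{hc:s2} becomes, after substituting $\zeta(A_{2k}) = 1$, $\zeta'(A_{2k}) = 0$ and $\zeta(A_{2k+1}) = 0$, $\zeta'(A_{2k+1}) = 1$,
\[
\hc(A_{2k}) = (2k^2-1)(2k^2-2) - 2\L_{W}(A_{2k}), \qquad
\hc(A_{2k+1}) = (2k(k+1))^2 + 1 - 2\L_{W}(A_{2k+1}).
\]
So the first main task is a counting one: compute $\L_{W}(A_{d}) = \sum_{u}\L(u)$, the total detour level, which here equals the total (ordinary) distance from the weight center(s) to all vertices. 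The recursive construction of $A_{2k}$ from $A_{2k-2}$ (and $A_{2k+1}$ from $A_{2k-1}$) gives a natural recursion: passing from diameter $d-2$ to $d$ adds a prescribed set of new pendant vertices, each at a known detour level (one more than its parent's level), and leaves the levels of old vertices unchanged because the weight center does not move. I would set up and solve this recursion to obtain a polynomial expression for $\L_{W}(A_{2k})$ and $\L_{W}(A_{2k+1})$; plugging these into the two displays above should yield exactly \eqref{hc:Ad} after routine algebra. I expect this polynomial bookkeeping — getting the exact count of vertices at each level and summing — to be the main obstacle, since it is where an off-by-one or a miscounted pendant set would break the final formula.

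The second task is to exhibit the ordering $\{x_0,\ldots,x_{n-1}\}$ required by Corollary~\ref{thm:s2}, namely one in which consecutive vertices $x_i,x_{i+1}$ always lie in different branches (for $A_{2k+1}$, rooted at the single weight center $w$) or in opposite branches (for $A_{2k}$, rooted at the adjacent pair $\{w,w'\}$), together with the endpoint condition $\L(x_0)+\L(x_{n-1}) = 1$ for $A_{2k+1}$ and $\L(x_0)+\L(x_{n-1}) = 0$ for $A_{2k}$. The endpoint condition is easy: for $A_{2k+1}$ take $x_0 = w$ (level $0$) and $x_{n-1}$ any neighbor of $w$ (level $1$); for $A_{2k}$ take $x_0 = w$, $x_{n-1} = w'$, both at level $0$. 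For the "different/opposite branches" condition I would argue by a greedy/pigeonhole round-robin: list the branches at $w$ (resp.\ the two opposite families of branches across the edge $ww'$) and order the vertices so that no two consecutive ones come from the same branch. This is possible precisely when no single branch contains more than $\lceil n/2\rceil$ vertices; by the defining property of weight centers (for $A_{2k}$, $T-\{ww'\}$ splits into two equal halves; for $A_{2k+1}$, each branch at $w$ has at most $n/2$ vertices), that size bound holds, so such an interleaving ordering exists. I would spell this out using the structure of $A_d$ — the branches are themselves small symmetric trees whose sizes are controlled by the recursion — rather than invoking a general lemma.

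Finally, since Corollary~\ref{thm:s2} is an "if and only if" for $DB(n/2)$ trees and we will have produced a valid ordering, the equality \eqref{hc:s2} holds, and the mapping $h$ of \eqref{eq:f0}--\eqref{eq:f1} is an optimal hamiltonian coloring; combined with the computed values of $\L_W(A_d)$ this gives \eqref{hc:Ad}. A small point to check carefully is consistency of the two $n$-values with the parity of $k$ and with $n\ge 4$, $\Delta \ge 3$ (true for all $k\ge 1$ since $A_3 = K_{1,4}$ and $A_2 = K_2$ is the lone excluded base case, handled by starting the recursion at $k=2$ for the even family and $k=1$ for the odd family). The genuinely delicate part remains the exact evaluation of $\L_W(A_d)$; everything else is structural and follows the template already established by Theorem~\ref{thm:s1} and Corollary~\ref{thm:s2}.
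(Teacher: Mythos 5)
Your proposal is correct and follows essentially the same route as the paper: compute $n$ and $\L_{W}(A_{d})$, obtain the lower bound from \eqref{hc:lower}, and match it via an ordering that interleaves branches (opposite branches across $ww'$ for $A_{2k}$, different branches at $w$ for $A_{2k+1}$, with $x_{0},x_{n-1}$ chosen as you describe) so that Corollary \ref{thm:s2} applies --- the paper merely states the values of $\L_{W}(A_{d})$ and writes the interleaving ordering explicitly where you argue its existence by a balance/pigeonhole count. One caution: carrying out your algebra gives $\hc(A_{2k+1}) = \bigl(2k(k+1)\bigr)^{2}+1-2\L_{W}(A_{2k+1}) = \frac{4}{3}k(k+1)(3k^{2}+k-1)+1$ (for $k=1$ this correctly yields $\hc(K_{1,4})=9$), so the coefficient $\frac{1}{3}$ in the displayed formula \eqref{hc:Ad} is a misprint and your ``routine algebra'' will not literally reproduce it.
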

\begin{proof}~The order $n$ and the total detour level $\L_{W}(A_{d})$ of $A_{d}$ $(d = 2k, 2k+1)$ are given by
\begin{eqnarray}\label{Ad:n}
n := \left\{
  \begin{array}{ll}
    2k^{2}, & \hbox{If }d = 2k, \\ [0.2cm]
    2k(k+1)+1, & \hbox{If }d = 2k+1.
  \end{array}
\right.
\end{eqnarray}

\begin{eqnarray}\label{Ad:l}
\L_{W}(A_{d}) := \left\{
  \begin{array}{ll}
    \frac{1}{3}k(k-1)(4k+1), & \hbox{If }d = 2k, \\ [0.2cm]
    \frac{2}{3}k(k+1)(2k+1), & \hbox{If }d = 2k+1.
  \end{array}
\right.
\end{eqnarray}
Substitute \eqref{Ad:n} and \eqref{Ad:l} into \eqref{hc:lower} we obtain the right-hand side of \eqref{hc:Ad} is a lower bound for $\hc(A_{d})$ $(d=2k,2k+1)$. Now we prove that this lower bound is tight and for this purpose it suffices to give a hamiltonian coloring whose span equal to the right-hand side of \eqref{hc:Ad}. Since $A_{d}$ $(d=2k,2k+1)$ are $DB(n/2)$ tree, by Corollary \ref{thm:s2} it is enough to give an ordering $\{x_{0},x_{1},\ldots,x_{n-1}\}$ of $V(A_{d})$ $(d=2k,2k+1)$ satisfying conditions (a) and (b) of Corollary \ref{thm:s2} and the hamiltonian coloring $h$ defined by \eqref{eq:f0}-\eqref{eq:f1} is an optimal hamiltonian coloring. For this we consider the following two cases for $A_{d}$.

\textsf{Case-1:} $d=2k$.~~In this case, note that $|W(A_{2k})| = 2$ and $A_{2k}$ has six branches. Denote the branches of $A_{2k}$ attached to $w$ by $T_{2},T_{4},T_{6}$ and $w'$ by $T_{1},T_{3},T_{5}$ such that $|T_{2}| > |T_{4}| = |T_{6}|$ and $|T_{1}| > |T_{3}| = |T_{5}|$. Define an ordering $\{x_0,x_1,\ldots,x_{n-1}\}$ of $V(A_{2k})$ as follows: Let $x_{0} = w$ and $x_{n-1} = w'$. For $1 \leq i \leq n-4k-1$, let

\begin{eqnarray*}
x_{i} := \left\{
  \begin{array}{ll}
    v \in V(T_{1}), & \hbox{If $i$ is odd}, \\ [0.2cm]
    v \in V(T_{2}), & \hbox{If $i$ is even}.
  \end{array}
\right.
\end{eqnarray*}

For $n-4k \leq i \leq n-2k-1$, let
\begin{eqnarray*}
x_{i} := \left\{
  \begin{array}{ll}
    v \in V(T_{3}), & \hbox{If $i$ is odd}, \\ [0.2cm]
    v \in V(T_{4}), & \hbox{If $i$ is even}.
  \end{array}
\right.
\end{eqnarray*}

For $n-2k \leq i \leq n-2$, let
\begin{eqnarray*}
x_{i} := \left\{
  \begin{array}{ll}
    v \in V(T_{5}), & \hbox{If $i$ is odd}, \\ [0.2cm]
    v \in V(T_{6}), & \hbox{If $i$ is even}.
  \end{array}
\right.
\end{eqnarray*}

In above ordering $v \in V(T_{i})$, where $i=1,2,\ldots,6$, we mean any $v$ from $V(T_{i})$ without repetition. Note that $\L(x_{0})+\L(x_{n-1}) = 0$ and $x_{i}$, $x_{i+1}$ ($0 \leq i \leq n-1$) are in opposite branches of $A_{2k}$.

\textsf{Case-2:} $d=2k+1$.~~In this case, note that $|W(A_{2k+1})| = 1$ and $A_{2k+1}$ has four branches. Denote the branches of $A_{2k+1}$ attached to $w$ by $T_{1},T_{2},T_{3},T_{4}$ such that $|T_{1}| = |T_{2}| > |T_{3}| = |T_{4}|$. Define an ordering $\{x_0,x_1,\ldots,x_{n-1}\}$ of $V(A_{2k+1})$ as follows: Let $x_{0} = w$ and $x_{n-1} = v \in V(T_{4})$ such that $v$ is adjacent to $w$. For $1 \leq i \leq n-2k-1$, let
\begin{eqnarray*}
x_{i} := \left\{
  \begin{array}{ll}
    v \in V(T_{1}), & \hbox{If $i$ is odd}, \\ [0.2cm]
    v \in V(T_{2}), & \hbox{If $i$ is even}.
  \end{array}
\right.
\end{eqnarray*}

For $n-2k \leq i \leq n-2$, let
\begin{eqnarray*}
x_{i} := \left\{
  \begin{array}{ll}
    v \in V(T_{3}), & \hbox{If $i$ is odd}, \\ [0.2cm]
    v \in V(T_{4}), & \hbox{If $i$ is even}.
  \end{array}
\right.
\end{eqnarray*}

Again in above ordering $v \in V(T_{i})$, where $i=1,2,3,4$, we mean any $v$ from $V(T_{i})$ without repetition. Note that $\L(x_{0})+\L(x_{n-1}) = 1$ and $x_{i}$, $x_{i+1}$ ($0 \leq i \leq n-1$) are in different branches of $A_{2k+1}$.
\end{proof}

\begin{Remark}~Note that in case of $A_{d}$ ($d=2k,2k+1$), $W(A_{d}) = C(A_{d})$ and hence both the lower bounds given in Theorem \ref{thm:lower} and \cite[Theorem 4]{Bantva1} are identical. Moreover, $\Delta(A_{2}) = 1$ but it is easy to verify that $\hc(A_{2}) = 0$ and the case of $A_{2}$ is also consistent with the formula in \eqref{hc:Ad}.
\end{Remark}

\begin{Example}An optimal hamiltonian coloring of $A_{2},A_{4},A_{6}$ and $A_{3},A_{5},A_{7}$ is shown in Fig.\ref{Fig:Ad}.
\end{Example}
\begin{figure}[h!]
  \centering
  \includegraphics[width=3in]{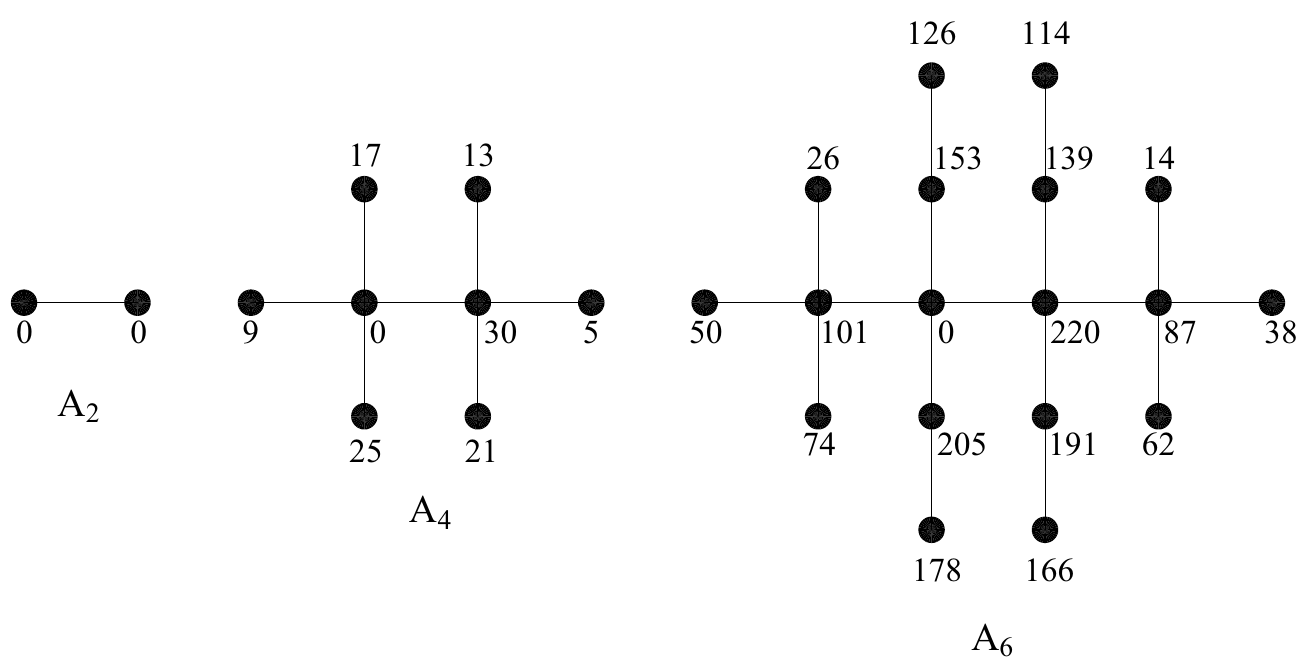}
  \includegraphics[width=3.8in]{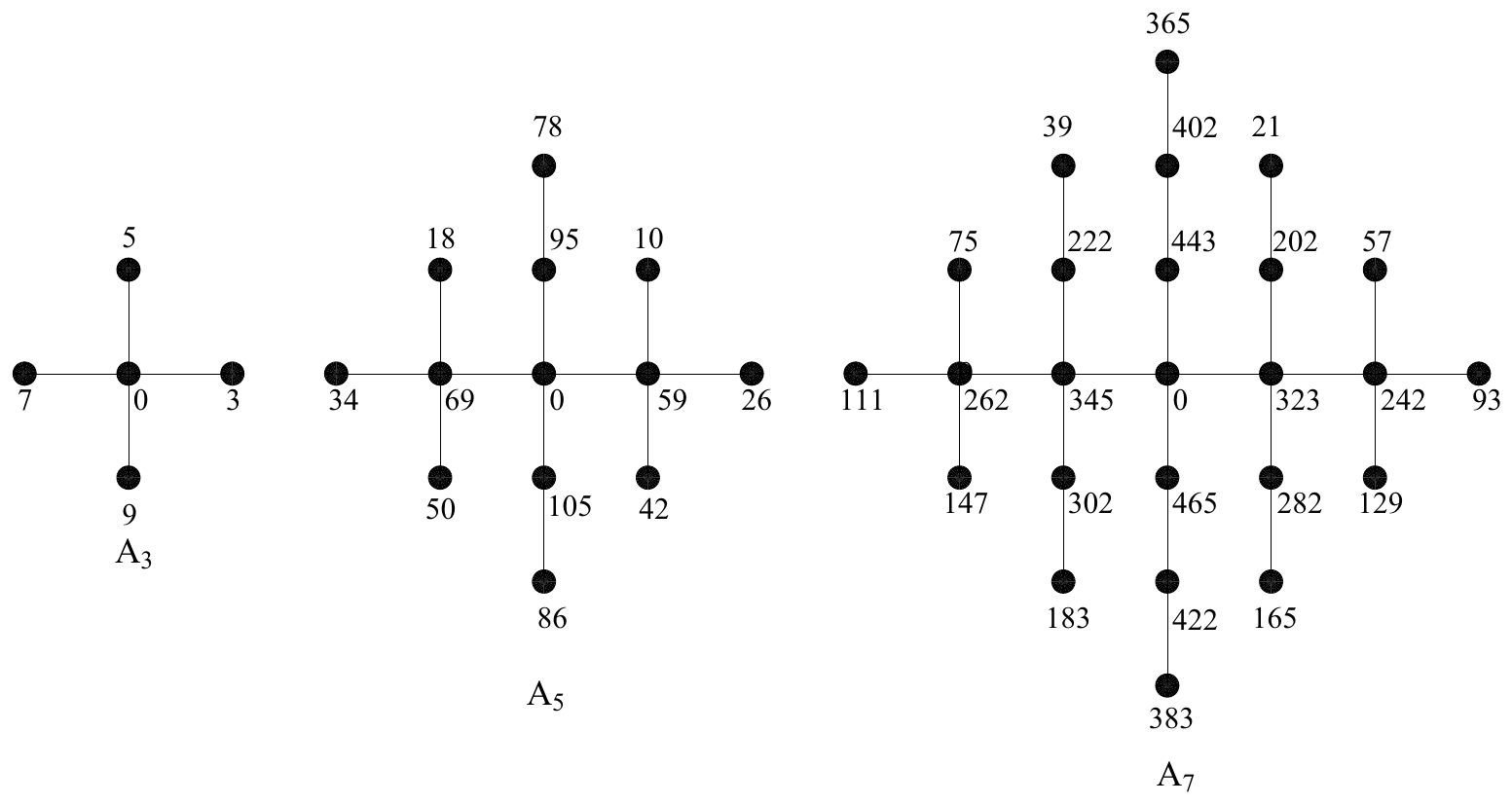}
  \caption{An optimal hamiltonian coloring of $A_{2},A_{4},A_{6}$ and $A_{3},A_{5},A_{7}$.}\label{Fig:Ad}
\end{figure}

\subsection{Broom Trees $B_{2k}$ and $B_{2k+1}$}\label{subsec:bk}

The broom tree $B_{n,d}$ consists of a path $P_{d}$, together with ($n-d$) end vertices all adjacent to the same end vertex of $P_{d}$. Let $k \geq 1$ be any integer. Define the broom trees $B_{2k}$ and $B_{2k+1}$ are broom tree $B_{k(2k+1),2k}$ and $B_{(k+1)(2k+1),2k+1}$, respectively. It is clear that $|W(B_{2k})| = |W(B_{2k+1})| = 1$. Denote $W(B_{2k}) = W(B_{2k+1}) = \{w\}$. Moreover, it is clear from definition that $B_{2k}$ and $B_{2k+1}$ are $DB(n/2)$ trees.

\begin{Theorem}\label{thm:broom}~Let $k \geq 1$ be any integer. Then
\begin{eqnarray}\label{hc:broom}
\hc(B_{d}) := \left\{
  \begin{array}{ll}
    2k(2k^{3}+2k^{2}-\frac{11}{2}k+1)+2, & \hbox{If }d = 2k, \\ [0.3cm]
    (2k+1)(2k^{3}+5k^{2}-2k-1)+2, & \hbox{If }d = 2k+1.
  \end{array}
\right.
\end{eqnarray}
\end{Theorem}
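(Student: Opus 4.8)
The plan is to mirror the structure of the proof of Theorem~\ref{thm:Ad}: first compute the two invariants that feed the lower bound, namely the order $n$ and the total detour level $\L_W(B_d)$, then invoke Corollary~\ref{thm:s2} to produce an ordering that attains equality. Since $B_d$ is a $DB(n/2)$ tree with a single weight center $W(B_d)=\{w\}$, we have $\zeta(B_d)=0$ and $\zeta'(B_d)=1$, so \eqref{hc:lower} reads $\hc(B_d)\ge (n-1)^2+1-2\L_W(B_d)$. The first task is therefore to record $n$: for $d=2k$ we have $n=k(2k+1)$ and for $d=2k+1$ we have $n=(k+1)(2k+1)$, directly from the definition of $B_{k(2k+1),2k}$ and $B_{(k+1)(2k+1),2k+1}$.

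Next I would identify the weight center $w$ of $B_d$ and sum the detour levels. The broom $B_{n,d}$ is a path $u_0u_1\cdots u_d$ with $n-d$ extra leaves hanging off $u_d$ (say). The weight center sits on the path at the position balancing the two sides; because the bristle end carries $\Theta(n)$ leaves, $w$ is pushed toward $u_d$, and one checks from the characterization of $W(T)$ in \cite{Liu} (equal-sized components, or the minimizer of $w_T$) exactly which vertex $u_m$ it is for each parity of $d$. With $w$ located, $\L(v)=D(v,w)=d(v,w)$ for every $v$, and $\L_W(B_d)=\sum_v d(v,w)$ splits into the contribution of the path vertices on either side of $w$ (an arithmetic-progression sum) plus the contribution of the $n-d$ bristle leaves, each at distance $d(u_d,w)+1$ from $w$. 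Summing these gives the closed forms $\L_W(B_{2k})$ and $\L_W(B_{2k+1})$; substituting into $(n-1)^2+1-2\L_W(B_d)$ and simplifying should reproduce the right-hand side of \eqref{hc:broom}. I expect this algebraic reduction, and in particular pinning down $w$ correctly for both parities, to be where the bookkeeping is most delicate.

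For the matching upper bound I would apply Corollary~\ref{thm:s2}: it suffices to exhibit an ordering $\{x_0,\dots,x_{n-1}\}$ of $V(B_d)$ with (a) $\L(x_0)+\L(x_{n-1})=1$ and (b) consecutive vertices $x_i,x_{i+1}$ in different branches at $w$. The branches at $w$ are the "short" path-branch (the vertices $u_0,\dots,u_{m-1}$ on the side away from the bristles), possibly a second short path-branch if $w$ is not adjacent to the bristle end, and the "big" branch containing $u_{m+1},\dots,u_d$ together with all $n-d$ bristle leaves. Since the big branch contains well over half the vertices, a naive alternation would run out of vertices in the small branch; the fix is the same three-block scheme used for $A_d$ — alternate the big branch against the small branch(es) until the small ones are exhausted, then continue placing the remaining big-branch vertices so that consecutive ones still lie in different sub-branches (bristle leaves versus the residual path segment inside the big branch). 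One must choose $x_0$ and $x_{n-1}$ as the two vertices adjacent to $w$ with $\L$-values summing to $1$ (one of them $w$ itself if $\L(w)=0$, i.e. $x_0=w$), exactly as in Case~2 of Theorem~\ref{thm:Ad}.

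The main obstacle will be verifying that such an ordering actually exists given the extreme imbalance of the branches at $w$: I must confirm that after interleaving the small branch(es) with the big branch, the number of big-branch vertices left over can still be laid out with every consecutive pair in distinct sub-branches of the big branch, and that the endpoint conditions on $\L(x_0),\L(x_{n-1})$ are compatible with this layout. Once the ordering is in hand, conditions (a) and (b) of Corollary~\ref{thm:s2} are immediate by construction, the $DB(n/2)$ property is already noted, and Corollary~\ref{thm:s2} then yields that $h$ defined by \eqref{eq:f0}--\eqref{eq:f1} is optimal with $\span(h)=(n-1)^2+1-2\L_W(B_d)$, completing the proof. A final sanity check is to test small cases ($k=1$, giving $B_2=B_{3,2}$ and $B_3=B_{6,3}$) against \eqref{hc:broom}.
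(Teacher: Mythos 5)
Your overall strategy (compute $n$ and $\L_W(B_d)$, plug into \eqref{hc:lower}, then build an ordering meeting conditions (a) and (b) of Corollary \ref{thm:s2}) is the same as the paper's, and the lower-bound half is fine in outline. The gap is in the upper-bound construction, and it traces back to the one fact you left unresolved: the location of the weight center. For these brooms, $W(B_d)$ is not an interior path vertex ``pushed toward'' the bristle end --- it is exactly the attachment vertex $u_d$ itself (moving one step along the path away from $u_d$ increases $w_T$ by $(n-d) - (d-2)>0$). Consequently the branches at $w$ are one path branch with $d-1$ vertices and $n-d$ singleton branches, one per bristle leaf; this is what the paper uses, and it is why condition (b) is easy to satisfy: alternate path vertices with leaves until the path branch is exhausted, then list the remaining leaves, since any two distinct leaves automatically lie in different branches.

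Your proposed construction instead assumes $w=u_m$ with the bristle end and all $n-d$ leaves inside a single ``big branch,'' and your fix --- laying out leftover big-branch vertices so that consecutive ones lie in distinct \emph{sub-branches} (leaves versus the residual path segment inside that branch) --- does not meet the hypothesis of Corollary \ref{thm:s2}: condition (b) requires consecutive vertices to be in different branches \emph{at the weight center}, and two vertices in the same branch at $w$ have $\phi(x_i,x_{i+1})>0$ regardless of how they split into sub-branches, so the sufficiency argument behind Theorem \ref{thm:s1} breaks (the cancellation $D=\L+\L$ fails). Worse, if $w$ really were an interior path vertex, the branch containing the bristles would have more than half the vertices and no ordering satisfying (b) could exist at all, so identifying $w=u_d$ is not delicate bookkeeping but the essential step; once it is in place your ``main obstacle'' disappears and the ordering is immediate. (A small additional caveat: for $k=1$ the case $B_2=B_{3,2}$ is a path with $\Delta=2$, so the $\Delta\geq 3$ hypotheses of Theorem \ref{thm:lower} and Corollary \ref{thm:s2} do not cover it and it must be checked separately.)
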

\begin{proof}~The order $n$ and the total detour level $\L_{W}(B_{d})$ of $B_{d}$ $(d = 2k, 2k+1)$ are given by
\begin{eqnarray}\label{broom:n}
n := \left\{
  \begin{array}{ll}
    k(2k+1), & \hbox{If }d = 2k, \\ [0.2cm]
    (k+1)(2k+1), & \hbox{If }d = 2k+1.
  \end{array}
\right.
\end{eqnarray}
\begin{eqnarray}\label{broom:l}
\L_{W}(B_{d}) := \left\{
  \begin{array}{ll}
    2k(2k-1), & \hbox{If }d = 2k, \\ [0.2cm]
    2k(2k+1), & \hbox{If }d = 2k+1.
  \end{array}
\right.
\end{eqnarray}
Substitute \eqref{broom:n} and \eqref{broom:l} into \eqref{hc:lower} we obtain the right-hand side of \eqref{hc:broom} is a lower bound for $\hc(B_{d})$ $(d=2k,2k+1)$. Now we prove that this lower bound is tight and for this purpose it is suffices to give a hamiltonian coloring whose span equal to the right-hand side of \eqref{hc:broom}. Since $B_{d}$ $(d=2k,2k+1)$ are $DB(n/2)$ tree, by Corollary \ref{thm:s2} it is enough to give an ordering $\{x_{0},x_{1},\ldots,x_{n-1}\}$ of $V(B_{d})$ $(d=2k,2k+1)$ satisfying conditions (a) and (b) of Corollary \ref{thm:s2} and the hamiltonian coloring $h$ defined by \eqref{eq:f0}-\eqref{eq:f1} is an optimal hamiltonian coloring. For this we consider the following two cases for $B_{d}$.

\textsf{Case-1:} $d=2k$.~~In this case, note that $B_{2k}$ has $k(2k-1)+1$ branches. Denote the branches of $B_{2k}$ by $T_{i}$, $i=1,2,\ldots,k(2k-1)+1$ such that $|T_{1}| > |T_{2}| = \ldots = |T_{k(2k-1)+1}|$. Let $S = \{v : v \in V(T_{1})\}$ and $S' =\{v : v \in V(T_{i}), i =2,3,\ldots,k(2k-1)+1\}$. Define an ordering $\{x_0,x_1,\ldots,x_n\}$ of $V(B_{2k})$ as follows: Let $x_{0} = w$ and for $1 \leq i \leq 4k-4$, let
\begin{eqnarray*}
x_{i} := \left\{
  \begin{array}{ll}
    v \in S, & \hbox{If $i$ is odd}, \\ [0.2cm]
    v \in S', & \hbox{If $i$ is even}.
  \end{array}
\right.
\end{eqnarray*}

For $4k-3 \leq i \leq n-1$, let $$x_i := v \in S'.$$

In above ordering $v \in X$, where $X=S,S'$ we mean any $v$ from $X$ without repetition. Note that $\L(x_{0})+\L(x_{n-1}) = 1$ and $x_{i}$, $x_{i+1}$ ($0 \leq i \leq n-1$) are in different branches of $B_{2k}$.

\textsf{Case-2:} $d=2k+1$.~~In this case, note that $B_{2k+1}$ has $k(2k+1)+1$ branches. Denote the branches of $B_{2k+1}$ by $T_{i}$, $i=1,2,\ldots,k(2k+1)+1$ such that $|T_{1}| > |T_{2}| = \ldots = |T_{k(2k+1)+1}|$. Let $S = \{v : v \in V(T_{1})\}$ and $S' =\{v : v \in V(T_{i}), i =2,3,\ldots,k(2k+1)+1\}$. Define an ordering $\{x_0,x_1,\ldots,x_n\}$ of $V(B_{2k+1})$ as follows: Let $x_{0} = w$ and for $1 \leq i \leq 4k$, let
\begin{eqnarray*}
x_{i} := \left\{
  \begin{array}{ll}
    v \in S, & \hbox{If $i$ is odd}, \\ [0.2cm]
    v \in S', & \hbox{If $i$ is even}.
  \end{array}
\right.
\end{eqnarray*}

For $4k+1 \leq i \leq n-1$, let $$x_i := v \in S'.$$

Again in above ordering $v \in X$, where $X=S,S'$ we mean any $v$ from $X$ without repetition. Note that $\L(x_{0})+\L(x_{n-1}) = 1$ and $x_{i}$, $x_{i+1}$ ($0 \leq i \leq n-1$) are in different branches of $B_{2k+1}$ which completes the proof.
\end{proof}

\begin{Example}~An optimal hamiltonian coloring of $B_{3},B_{5}$ and $B_{4},B_{6}$ is shown in Fig. \ref{Fig:Bd}.
\end{Example}
\begin{figure}[h!]
  \centering
  \includegraphics[width=4.5in]{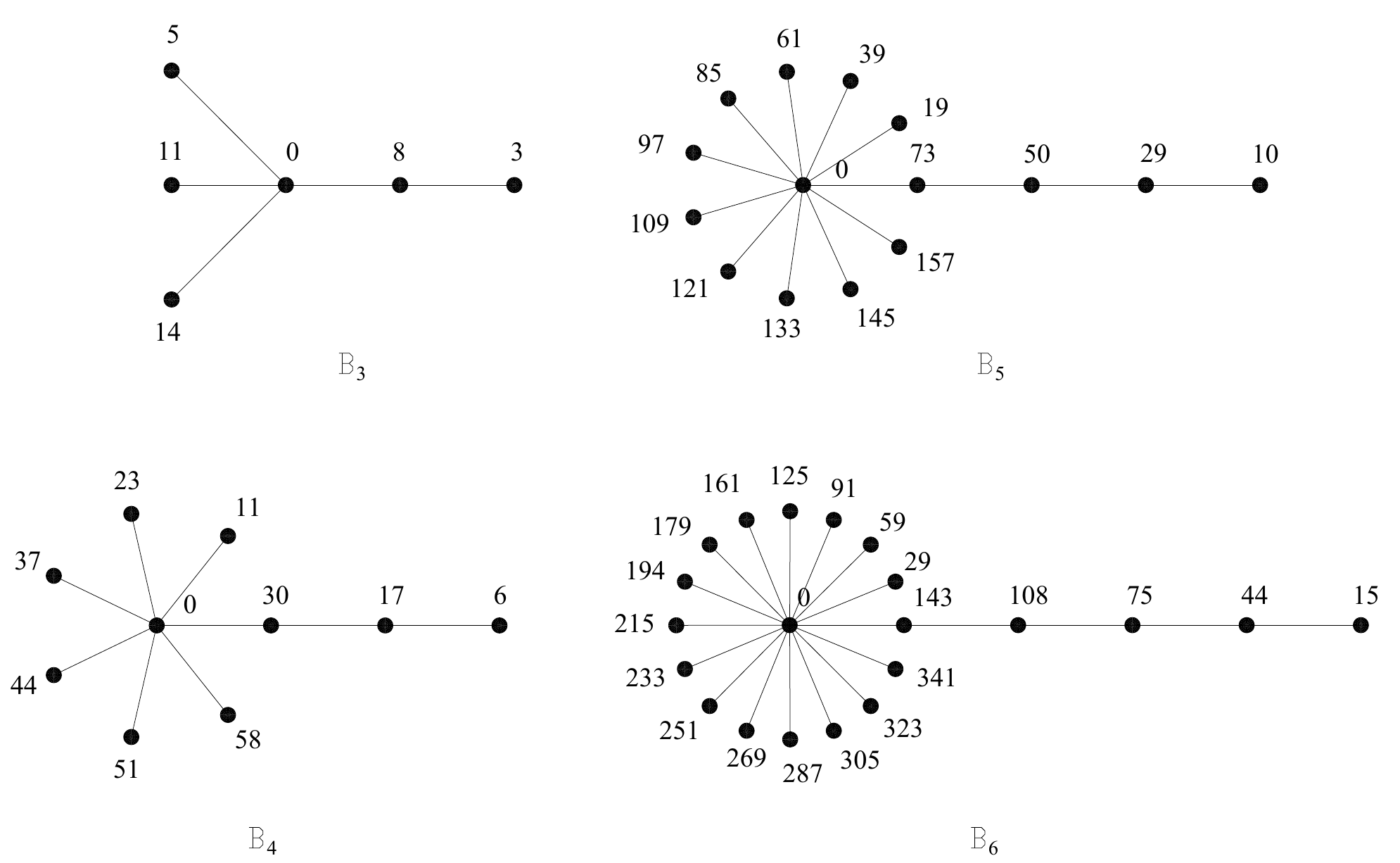}
  \caption{An optimal hamiltonian coloring of $B_{3},B_{5}$ and $B_{4},B_{6}$.}\label{Fig:Bd}
\end{figure}

We claim that the lower bound given in Theorem \ref{thm:lower} is better than the lower bound given in \cite[Theorem 4]{Bantva1} for broom trees $B_{d}$ ($d = 2k,2k+1$) except $B_{2}$. For broom tree $B_{2}$, both lower bounds are identical.  Denote the right-hand side of \eqref{hc:lower} and \cite[Equation-(2)]{Bantva1} by $lb_{W}(T)$ and $lb(T)$ respectively. That is, $lb_{W}(T) = (n-1)(n-1-\zeta(T))+\zeta'(T)-2\L_{W}(T)$ and $lb(T) = (n-1)(n-1-\ve(T))+\ve'(T)-2\L(T)$. Since $hc(B_{d}) = lb_{W}(B_{d})$ ($d=2k,2k+1 \geq 3$), it is enough to prove that $lb_{W}(B_{d}) - lb(B_{d}) > 0$ to justify our claim.

\begin{Theorem}~Let $k \geq 1$ be any integer. Then $lb_{W}(B_{d}) - lb(B_{d}) > 0$, where $d=2k,2k+1 \geq 3$.
\end{Theorem}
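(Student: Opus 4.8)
The plan is to pin down the center and the weight center of $B_{d}$, compute the two total detour levels $\L(B_{d})$ and $\L_{W}(B_{d})$ explicitly, and reduce $lb_{W}(B_{d}) - lb(B_{d})$ to a polynomial in $k$ whose positivity is immediate.

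First, since $W(B_{d}) = \{w\}$ we have $\zeta(B_{d}) = 0$ and $\zeta'(B_{d}) = 1$, so expanding the definitions of $lb_{W}$ and $lb$ and cancelling the common term $(n-1)^{2}$ gives
\begin{eqnarray*}
lb_{W}(B_{d}) - lb(B_{d}) & = & (n-1)\,\ve(B_{d}) + \big(1 - \ve'(B_{d})\big) - 2\big(\L_{W}(B_{d}) - \L(B_{d})\big)\\
& = & n\,\ve(B_{d}) - 2\big(\L_{W}(B_{d}) - \L(B_{d})\big),
\end{eqnarray*}
using $1 - \ve'(B_{d}) = \ve(B_{d})$. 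Thus it remains only to evaluate $\ve(B_{d})$ and $\L(B_{d}) - \L_{W}(B_{d})$, the values of $n$ and $\L_{W}(B_{d})$ being already recorded in \eqref{broom:n} and \eqref{broom:l}.

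Label the spine of $B_{d}$ by $v_{1}v_{2}\cdots v_{d}$ with all pendant leaves hanging from $v_{1}$. A direct computation gives $\epsilon(v_{i}) = \max\{i,\,d-i\}$, so the center is the single vertex $v_{k}$ when $d = 2k$ (whence $\ve(B_{2k}) = 0$) and the pair $\{v_{k},v_{k+1}\}$ when $d = 2k+1$ (whence $\ve(B_{2k+1}) = 1$); moreover, since the pendant mass dominates the spine, $w_{B_{d}}$ is strictly minimised at $v_{1}$, so $\L_{W}(B_{d}) = w_{B_{d}}(v_{1})$, recovering \eqref{broom:l}. For $d = 2k$ the center is a single vertex, so $\L(B_{2k}) = w_{B_{2k}}(v_{k})$, and I would obtain $w_{B_{2k}}(v_{k}) - w_{B_{2k}}(v_{1})$ by telescoping the one-step differences $w_{B_{2k}}(v_{i+1}) - w_{B_{2k}}(v_{i}) = 2i + n - 2d$ over $i = 1,\dots,k-1$; this gives $\L(B_{2k}) - \L_{W}(B_{2k}) = (k-1)(k + n - 2d) = 2k(k-1)^{2}$, hence
$$lb_{W}(B_{2k}) - lb(B_{2k}) = 4k(k-1)^{2} > 0 \qquad (k \geq 2,\ \mbox{i.e. } d = 2k \geq 3).$$
For $d = 2k+1$ the center is $\{v_{k},v_{k+1}\}$; cutting the edge $v_{k}v_{k+1}$ writes $\L(B_{2k+1})$ as $w_{B'}(v_{k}) + w_{B''}(v_{k+1})$, where $B'$ is the component holding $v_{1},\dots,v_{k}$ and all pendants and $B''$ is the path $v_{k+1},\dots,v_{2k+1}$; a short count gives $\L(B_{2k+1}) = 2k^{2}(k+1)$. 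Substituting $n = (k+1)(2k+1)$ and $\L_{W}(B_{2k+1}) = 2k(2k+1)$ into the displayed formula yields
$$lb_{W}(B_{2k+1}) - lb(B_{2k+1}) = 4k^{3} - 2k^{2} - k + 1 > 0 \qquad (k \geq 1),$$
the positivity being clear since $4k^{3} - 2k^{2} - k + 1 \geq k(4k^{2} - 2k - 1) \geq k \geq 1$ for $k \geq 1$.

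The one place to be careful is the identification of $C(B_{d})$ and the evaluation of $\L(B_{d})$ — in particular handling the two-center case $d = 2k+1$ cleanly (by cutting the central edge, so that $\L$ really is the sum of distances to the nearer center) and getting the component sizes right in the telescoping identity for $w_{B_{2k}}$. Once those are settled, everything else is the routine polynomial algebra and the two trivial sign checks above, together with the observation that the excluded case $d = 2$ is exactly $k = 1$ in the even family, where $v_{1}$ is simultaneously the center and the weight center and the two bounds coincide.
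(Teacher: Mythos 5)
Your proposal is correct and follows essentially the same route as the paper: a case split on $d=2k$ versus $d=2k+1$ leading to the identities $lb_{W}(B_{2k})-lb(B_{2k})=4k(k-1)^{2}$ and $lb_{W}(B_{2k+1})-lb(B_{2k+1})=4k^{3}-2k^{2}-k+1$, whose positivity (for $d\geq 3$) is then immediate. The only difference is that you explicitly verify the ingredients the paper leaves implicit -- the identification of $C(B_{d})$ and $W(B_{d})$, the values of $\ve(B_{d})$ and $\L(B_{d})$, and the reduction $lb_{W}-lb=n\,\ve(B_{d})+2(\L(B_{d})-\L_{W}(B_{d}))$ -- and these computations are all correct, including the remark that the excluded case $d=2$ (i.e.\ $k=1$ in the even family) is exactly where the two bounds coincide.
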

\begin{proof} We consider the following two cases.

\textsf{Case-1:} $d=2k$.~~In this case, $lb_{W}(B_{d}) - lb(B_{d}) = 4k(k-1)^{2} > 0$.

\textsf{Case-2:} $d=2k+1$.~~In this case, $lb_{W}(B_{d}) - lb(B_{d}) = 4k^{3}-2k^{2}-k+1 = k[(2k-1)^{2}+2(k-1)]+1 > 0$ which completes the proof.
\end{proof}

\section{Concluding remarks}\label{sec:con}

In \cite{Chartrand1}, Chartrand \emph{et al.} proved that for $n \geq 1$,
\begin{equation}\label{hc:K1n}
  \hc(K_{1,n-1}) = (n-2)^{2}.
\end{equation}
This result can be proved using Corollary \ref{thm:s2} as follows. The total detour level of $K_{1,n-1}$ is $\L(K_{1,n-1}) = n-1$ and $|W(K_{1,n-1})| = 1$. Substituting $\L(K_{1,n-1}) = n-1$ in \eqref{hc:lower}, we obtain that the right-hand side of \eqref{hc:K1n} is a lower bound for $\hc(K_{1,n-1})$ and it is easy to find a hamiltonian coloring whose span equal to this lower bound (refer \cite{Chartrand1}).

A tree is said to be a caterpillar $C$ if it consists of a path $v_{1},v_{2},\ldots,v_{m} (m \geq 3)$, called the spine of $C$, with some hanging edges known as legs, which are incident to the inner vertices $v_{2},v_{3},\ldots,v_{m-1}$. If $d(v_{i}) = d$ for $i=2,3,\ldots,m-1$, then denote the caterpillar by $C(m,d)$. In \cite{Shen}, Shen \emph{et al.} proved that for any positive integers $m \geq 3$ and $d \geq 3$,
\begin{eqnarray}\label{hc:Cmd}
\hc(C(m,d)) := \left\{
  \begin{array}{ll}
    \frac{2d-3}{2d-2}(n-2)^{2}+\frac{d-1}{2}, & \hbox{If $m$ is odd}, \\ [0.3cm]
    \frac{2d-3}{2d-2}(n-2)^{2}, & \hbox{If $m$ is even}.
  \end{array}
\right.
\end{eqnarray}
This result can also be proved using Corollary \ref{thm:s2} as follows. The order $n$ and total detour level $\L(C(m,d))$ are given by
\begin{eqnarray}\label{Cmd:n}
n := \left\{
  \begin{array}{ll}
    (2k-1)(d-1)+2, & \hbox{If $m = 2k+1$}, \\ [0.3cm]
    2k(d-1)-2(d-2), & \hbox{If $m = 2k$}.
  \end{array}
\right.
\end{eqnarray}
\begin{eqnarray}\label{Cmd:l}
\L(C(m,d)) := \left\{
  \begin{array}{ll}
    (k(k+1)-1)(d-1)+1, & \hbox{If $m = 2k+1$}, \\ [0.3cm]
    k(k-1)(d-1), & \hbox{If $m = 2k$}.
  \end{array}
\right.
\end{eqnarray}
Substituting \eqref{Cmd:n} and \eqref{Cmd:l} into \eqref{hc:lower} we obtain right-hand side of \eqref{hc:Cmd} is a lower bound for $\hc(C(m,d))$ and it is easy to find a hamiltonian coloring whose span equal to this lower bound (refer \cite{Shen}).

\end{document}